\definecolor{urlcolor}{rgb}{0,0.5,0}
\definecolor{linkcolor}{rgb}{0.5,0,0}
\definecolor{citecolor}{rgb}{0,0,0.5}
\definecolor{purple}{rgb}{0.5,0,0.5}
\definecolor{darkgreen}{rgb}{0,0.6,0}
\newcommand{\N}{\mathbb{N}}
\newcommand{\R}{\mathbb{R}}
\newcommand{\rp}{\mathbb{R}\mathrm{P}}
\newcommand{\C}{\mathbb{C}}
\newcommand{\cp}{\mathbb{C}\mathrm{P}}
\newcommand{\E}{\mathbf{E}}
\renewcommand{\P}{\mathbf{P}}
\newcommand{\herm}{\mathrm{Herm}}
\newcommand{\gr}{\mathrm{Gr}}
\DeclareMathOperator{\argmin}{\text{argmin}\,}
\newtheorem{thm}{Theorem}
\newtheorem{prop}[thm]{Proposition}
\newtheorem{cor}[thm]{Corollary}
\newtheorem{rem}[thm]{Remark.}
\theoremstyle{nonumberplain}
\newtheorem{proof}{Proof.}
\newtheorem{proof2}{Proof of \thref{grprojballuniq}.}
\newcommand{\rinline}[1]{???}
\begin{document}
\title{Non-asymptotic Confidence Sets for Extrinsic Means\linebreak on Spheres and Projective Spaces}
\author{Thomas Hotz, Florian Kelma
\\[12pt] Institut f\"ur Mathematik\\ Technische Universit\"at Ilmenau}
\date{}
\maketitle

\begin{abstract}
Confidence sets from i.i.d.\ data are constructed for the extrinsic mean of a probabilty measure $\P$ on spheres, real projective spaces $\rp^k,$ and complex projective spaces $\cp^k,$ as well as Grassmann manifolds, with the latter three embedded by the Veronese-Whitney embedding. When the data are sufficiently concentrated, these are projections of a ball around the corresponding Euclidean sample mean. Furthermore, these confidence sets are rate-optimal. The usefulness of this approach is illustrated for projective shape data.
\end{abstract}

\section{Introduction and definitions} 

Data assuming values on unit spheres, particularly on the circle, as well as on real projective spaces arise frequently in applications, examples being measurements of wind directions or axial data describing optical axes of crystals \citep{MJ}. Moreover, we consider data assuming values on real and complex Grassmann manifolds which play important roles in several shape spaces which arise in image-processing applications, cf.\ \citep{KBCL,PM,MP}.

Motivated by these applications, we will concern ourselves with the arguably simplest statistic, the mean. However, since there is neither a natural addition of points on these manifolds nor the division by a natural number, the meaning of ``mean'' is a priori unclear.


We assume the data can be modelled as independent random variables $Z_1,\dots,Z_n$ which are identically distributed as the random variable $Z$ taking values in a closed subset $M$ of the unit sphere $S^k = \{ x \in \R^{k+1} \,:\, \| x \| = 1 \}.$ Using the Veronese-Whitney embedding, real and complex Grassmann manifolds $\gr(m, \R^{d+1})$ resp.\ $\gr(m, \C^{d+1}),$ $m<d+1,$ can be seen as closed subspaces of unit spheres allowing to treat these cases, too. Recall that $\gr(1, \R^{d+1}) = \rp^d$, and analogously $\gr(1, \C^{d+1}) = \cp^d$ are the real and complex projective spaces of dimension $d.$


Since any unit sphere is a subset of a Euclidean vector space, the Euclidean sample mean $\bar{Z}_n = \tfrac{1}{n}\sum_{i=1}^n {Z_i}$ is well-defined, but $\bar{Z}_n$ cannot be taken as a mean of the sphere since it may have norm less than 1. Though, the Euclidean sample mean is the minimiser of the sum of squared distances, and thus this can be put in the more general framework of Fr\'{e}chet means, see \citep{MF}: define the set of extrinsic sample means to be \begin{equation}
\hat{\mu}_n = \underset{\nu\in M}{\argmin} \sum_{i=1}^n\| Z_i - \nu \|^2,
\end{equation}
and analogously define the set of extrinsic population means of the random variable $Z$ to be
\begin{equation*} \mu = \underset{\nu\in M}{\argmin} \E \| Z - \nu \|^2 \end{equation*} where $M\subseteq S^k$ is the subset of the possible values of $Z.$ As usual, the extrinsic sample means are the extrinsic population means when considering the empirical distribution of $Z_1,\dots,Z_n.$

The extrinsic population mean is closely related to the Euclidean population mean $\E Z$: \begin{equation*} \mu = \underset{\nu\in M}{\argmin} \|\E Z - \nu \|^2 \end{equation*} since $\E\|Z-\nu\|^2 = \E\|Z-\E Z\|^2 + \|\E Z-\nu\|^2.$ Hence, $\mu$ is the set of points on the manifold closest to $\E Z,$ and $\mu$ is unique if and only if the orthogonal projection of $\E Z$ onto $M$ is unique; for $M=S^k,$ this is the case if and only if $\E Z\neq0,$ since the orthogonal projection of $\E Z$ is then given by $\mu=\E Z/\|\E Z\|$; otherwise, i.e.\ if $\E Z=0,$ the set of extrinsic population means is all of $S^k.$ Analogous results hold for the extrinsic sample mean.

In this article, we aim to construct non-asymptotic confidence sets for the extrinsic population mean without making any assumptions about the distribution of the data besides the assumption that the data are independent and identically distributed. It has been shown in \citep{Hot} and \citep{HKW} that this is possible for data on the circle. We will generalize and improve the formalism used in \citep{Hot} to obtain confidence sets in this more general case.

First, we construct confidence sets for the Euclidean population mean noting that the projection of these sets are confidence sets for the extrinsic population mean. The constructed confidence sets for the Euclidean population mean are no open balls, but we will see (\thref{proconf}) that a $(1-\alpha)$-confidence set for the extrinsic population mean is given by the projection of the open ball around the Euclidean sample mean $\bar{Z}_n$ with radius \begin{equation*} \varepsilon = \sqrt{\tfrac{1}{\alpha n} \left(1-\|\bar{Z}_n\|^2 + \tfrac{1}{\alpha n} \right) }. \end{equation*}So, the remaining problem for arbitrary closed $M\subseteq S^k$ is to understand the projection of open balls.

In the simple case where $M=S^k,$ we immediately obtain \begin{equation*} \left\{ x\in S^k \, : \, \sin \sphericalangle (x,\hat{\mu}_n) < \sqrt{ \tfrac{1-\left\|\bar{Z}_n\right\|^2+\tfrac{1}{\alpha n}}{\alpha n \left\|\bar{Z}_n\right\|^2}} \right\} \end{equation*} as a $(1-\alpha)$-confidence set, if $\| \bar{Z}_n \|^2 \geq \tfrac{1}{\alpha n}$ (\thref{sphereconf}). We further note that this condition is violated with exponentially decreasing probability for $\E Z \neq 0,$ and fulfilled with probability at most $\alpha$ for $\E Z = 0.$

Additionally, we cover complex Grassmann manifolds $\gr(m,\C^{k+1})$ in  \hyperref[grassmann]{Section\thref{grassmann}}. These spaces can be considered as closed subspaces of a $((k+1)^2-1)$-dimensional sphere by the \emph{Veronese-Whitney embedding} into the Euclidean space of Hermitian matrices with the Euclidean norm $\|\cdot\|$ being called Frobenius norm $\|\cdot\|_F$ there. In this case, the projection $\pi$ maps a Hermitian matrix to the span of eigenvectors corresponding to its $m$ largest eigenvalues. Using basic results from linear algebra, we obtain an open $(1-\alpha)$-confidence ball around the extrinsic sample mean $\hat{\mu}_n$ with radius \begin{equation*} \delta_n = \frac{\sqrt{2} \sqrt{1-\|\bar{Z}_n\|_F^2+\tfrac{1}{\alpha n}}} {\sqrt{\alpha n}(\hat{\lambda}_m-\hat{\lambda}_{m+1})-\sqrt{1-\|\bar{Z}_n\|_F^2+\tfrac{1}{\alpha n}}}\end{equation*} if the $m$-th largest and $m+1$-st largest eigenvalues $\hat{\lambda}_m$ and $\hat{\lambda}_{m+1}$ of the Euclidean sample mean $\bar{Z}_n$ are sufficiently separated (\thref{grconfreg}). Again, this condition is violated with exponentially decreasing probability for uniquely projected $\E Z ,$ and fulfilled with probability at most $\alpha$ for non-uniquely projected $\E Z .$

This can then be applied to shape data, examples of which are shown in \hyperref[Applications]{Section 5}. We conclude with a discussion of the results.

\section{Confidence sets for Euclidean and extrinsic means}\label{GeneralSetting}

As above, let $Z_1,\ldots,Z_n, \,n\in\N$ be i.i.d.\ random elements on $M$ where $M$ is a closed subset $M$ of the \emph{unit sphere} $S^k = \{ x \in \R^{k+1} \,:\, \vert x \vert = 1 \}.$  Further, let $\pi: \R^{k+1}\rightarrow M$ be the projection of $\R^{k+1}$ to $M$ in the sense of best approximation. 
With this setup, we have the following multivariate version of Chebyshev's inequality:

\begin{thm}\label{thmmarkov}
For $\alpha\in(0,1),$ \begin{equation*} \mathbf{P}\left(\left\|\bar{Z}_n-\mathbf{E}Z\right\|^2\geq\frac{1-\left\|\mathbf{E}Z\right\|^2}{\alpha n}\right)\leq\alpha.\end{equation*}
\end{thm}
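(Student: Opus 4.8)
The plan is to reduce the statement to the one-dimensional Markov inequality applied to the nonnegative random variable $\|\bar{Z}_n-\E Z\|^2$. First I would invoke Markov's inequality in the form $\P(X\geq t)\leq \E X/t$ for $t>0$, with $X=\|\bar{Z}_n-\E Z\|^2$ and the threshold $t=(1-\|\E Z\|^2)/(\alpha n)$. This immediately hands the problem over to computing the single expectation $\E\|\bar{Z}_n-\E Z\|^2$, after which the claimed bound of $\alpha$ should fall out by cancellation.

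The key computation is that this expectation equals $(1-\|\E Z\|^2)/n$. I would expand $\|\bar{Z}_n-\E Z\|^2$ as the inner product $\langle \bar{Z}_n-\E Z,\,\bar{Z}_n-\E Z\rangle$ and use $\E\bar{Z}_n=\E Z$, so that the expectation is the trace of the covariance of $\bar{Z}_n$. Writing $\bar{Z}_n=\tfrac1n\sum_i Z_i$ and using that the $Z_i$ are i.i.d., the off-diagonal cross terms vanish in expectation because $\E\langle Z_i-\E Z,\,Z_j-\E Z\rangle=0$ for $i\neq j$ by independence, leaving $\tfrac1n\bigl(\E\|Z\|^2-\|\E Z\|^2\bigr)$. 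The sphere hypothesis now enters decisively: since $Z$ takes values in $M\subseteq S^k$ we have $\|Z\|=1$ almost surely, hence $\E\|Z\|^2=1$ and the expectation is exactly $(1-\|\E Z\|^2)/n$. Substituting into the Markov bound gives $\P(X\geq t)\leq \frac{(1-\|\E Z\|^2)/n}{(1-\|\E Z\|^2)/(\alpha n)}=\alpha$.

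I do not expect a genuine obstacle; the argument is elementary, the only care being the case $\|\E Z\|=1$, where the threshold $t$ vanishes and Markov's inequality is inapplicable. There the same variance computation yields $\E\|Z-\E Z\|^2=0$, so $Z=\E Z$ (and hence $\bar{Z}_n=\E Z$) almost surely; the distribution is then a point mass, which must be treated as an exception and is in any case irrelevant for the confidence-set construction. The substantive content is thus the dimension-free identity $\E\|\bar{Z}_n-\E Z\|^2=(1-\|\E Z\|^2)/n$, which is precisely where the assumption $\|Z\|=1$ is exploited and which makes the resulting threshold independent of the ambient dimension $k$.
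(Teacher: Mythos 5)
Your proposal is correct and takes essentially the same route as the paper's own proof: establish the dimension-free identity $\E\left\|\bar{Z}_n-\E Z\right\|^2=\tfrac{1}{n}\left(1-\|\E Z\|^2\right)$ via $\|Z\|=1$ almost surely and the vanishing of cross terms under independence, then apply Markov's inequality to the nonnegative variable $\left\|\bar{Z}_n-\E Z\right\|^2$. Your aside on the degenerate case $\|\E Z\|=1$ is in fact slightly more careful than the paper, which silently ignores that there the threshold vanishes and the stated bound fails as written (the event $\{0\geq 0\}$ is sure), since then $Z=\E Z$ almost surely.
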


\begin{proof} Observe that 
$\mathbf{E}\left\|Z-\mathbf{E}Z\right\|^2=1-\left\|\mathbf{E}Z\right\|^2$, 
such that by the assumption of observing i.i.d.\ data, $\mathbf{E}\left\|\bar{Z}_n-\mathbf{E}Z\right\|^2=\frac{1}{n}(1-\left\|\mathbf{E}Z\right\|^2)$. 
Now apply Markov's inequality.\end{proof}

This simple inequality gives a level $\alpha$ \emph{test of the hypothesis} $\E Z = q$ for any fixed $q$ with $\Vert q \Vert \leq 1$ (note that necessarily $\Vert \E Z \Vert \leq 1$ by convexity): simply reject $q$ if $\| \bar{Z}_n - q \|^2 \geq \frac{1-\|q\|^2}{\alpha n}.$ The acceptance set of that test, 
\begin{equation*} C(p,a) := \left\{ q \,:\, \Vert q \Vert \leq 1 \,\wedge\, \|p-q\|^2<\frac{1-\|q\|^2}{a} \right\} \end{equation*} 
with $p=\bar{Z}_n$ and $a=\alpha n,$ is therefore a $(1-\alpha)$-confidence set for the Euclidean population mean. Henceforth, the projection of this set under $\pi$ is a $(1-\alpha)$-confidence set for the extrinsic population mean.

\begin{prop}\label{proball}
For $r>0,$ define the open ball \begin{equation*}B_r(p):=\left\{q\in E\,:\,\|p-q\|<r\right\}\end{equation*} around $p$ with radius $r.$  Then \begin{equation*}\pi\left(C(p,a)\right)=\pi\biggl(B_{\sqrt{\tfrac{1}{a} \left(1-\|p\|^2 + \tfrac{1}{a} \right) } }(p)\biggr).\end{equation*} In particular, if $\|p\|^2 \geq \tfrac{1}{a},$ then $C(p,a)$ is contained in the positive cone over \begin{equation*} B_{\sqrt{\tfrac{1}{a} \left(1-\|p\|^2 + \tfrac{1}{a} \right) } }(p).\end{equation*}\\
\end{prop}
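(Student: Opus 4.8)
The plan is to show that both $C(p,a)$ and the ball $B_r(p)$ with $r=\sqrt{\tfrac{1}{a}(1-\|p\|^2+\tfrac{1}{a})}$ are themselves Euclidean balls whose centres lie on the ray $\R_{>0}\,p$, and that they are carried onto one another by a dilation centred at the origin. Since $\pi$ is invariant under positive scalings, the two balls then have the same image.

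First I would rewrite the defining inequality of $C(p,a)$. Multiplying $\|p-q\|^2<\tfrac{1-\|q\|^2}{a}$ by $a$ and collecting terms gives $(1+a)\|q\|^2-2a\langle p,q\rangle+a\|p\|^2-1<0$; completing the square shows this is equivalent to $\|q-\tfrac{a}{1+a}p\|^2<\tfrac{1+a(1-\|p\|^2)}{(1+a)^2}=:\rho^2$. Hence the constraint region is the open ball $B_\rho(c)$ with centre $c=\tfrac{a}{1+a}p$. I would also note that the side condition $\|q\|\le 1$ is automatically satisfied, since any $q$ obeying the strict inequality forces $1-\|q\|^2>0$; thus $C(p,a)=B_\rho(c)$ exactly.

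Next I would record that $r^2=\tfrac{a(1-\|p\|^2)+1}{a^2}$, so that $\tfrac{r}{\rho}=\tfrac{1+a}{a}$ while $\tfrac{1+a}{a}\,c=p$. Consequently $B_r(p)=\tfrac{1+a}{a}\,C(p,a)$, i.e.\ the two balls differ by the homothety $q\mapsto\lambda q$ with $\lambda=\tfrac{1+a}{a}>0$ about the origin. The key geometric input is then the scale-invariance of the best-approximation projection: for $\nu\in M\subseteq S^k$ one has $\|q-\nu\|^2=\|q\|^2-2\langle q,\nu\rangle+1$, so minimising over $\nu$ amounts to maximising $\langle q,\nu\rangle$, which is unchanged when $q$ is replaced by $\lambda q$ for any $\lambda>0$ (and $\pi(0)=M$). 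Hence $\pi(\lambda q)=\pi(q)$ for every $q$, and applying this along the bijection $q\mapsto\lambda q$ between $C(p,a)$ and $B_r(p)$ yields $\pi(C(p,a))=\pi(B_r(p))$.

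Finally, for the ``in particular'' statement, since $C(p,a)=\tfrac{a}{1+a}B_r(p)$ is a positive rescaling of $B_r(p)$, it lies in the positive cone $\{\lambda q:\lambda>0,\ q\in B_r(p)\}$; the hypothesis $\|p\|^2\ge\tfrac{1}{a}$ is exactly the inequality $\|p\|\ge r$ that keeps $0\notin B_r(p)$, so that this cone is proper rather than all of $E$. The step I expect to require the most care is carrying out the square-completion and radius-ratio computation cleanly, and making the elementary but essential scale-invariance of $\pi$ onto a subset of the sphere fully explicit, including its degenerate behaviour at the origin.
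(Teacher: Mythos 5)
Your proof is correct, but it takes a genuinely different route from the paper's. The paper works with angles: it minimises $\cos\varphi = \tfrac{\langle p,q\rangle}{\|p\|\|q\|}$ over the boundary constraint $\|p-q\|^2 = \tfrac{1-\|q\|^2}{a}$ by a calculus computation, finds $\|q_{max}\| = \sqrt{\tfrac{a}{a+1}\bigl(\|p\|^2-\tfrac{1}{a}\bigr)}$, and verifies that the extremal tangent distance satisfies $\|p\|^2\sin^2\varphi_{max} = \tfrac{1}{a}\bigl(1-\|p\|^2+\tfrac{1}{a}\bigr) = \varepsilon^2$, i.e.\ that $C(p,a)$ and $B_\varepsilon(p)$ subtend the same cone of directions at the origin (Figure~1), whence radial invariance of $\pi$ gives the claim. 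You instead complete the square to identify $C(p,a)$ \emph{exactly} as the Euclidean ball $B_\rho(c)$ with $c=\tfrac{a}{1+a}p$ and $\rho^2 = \tfrac{1+a(1-\|p\|^2)}{(1+a)^2}$ (your algebra checks out, including the observation that the side condition $\|q\|\le 1$ is automatic), and then note that $B_r(p) = \tfrac{1+a}{a}\,C(p,a)$ is an exact homothetic copy about the origin, so scale-invariance of $\pi$ --- which you rightly make explicit via $\|q-\nu\|^2 = \|q\|^2 - 2\langle q,\nu\rangle + 1$ for $\nu\in M$, an argument valid for arbitrary closed $M\subseteq S^k$, not just the full sphere --- yields $\pi(C(p,a)) = \pi(B_r(p))$ at once. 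Your route buys something real: the set identity $C(p,a)=\tfrac{a}{1+a}B_r(p)$ delivers both inclusions of the projection equality in one stroke (the paper's tangency computation, strictly read, establishes only the extremal angle and leaves the equality of the direction cones to the picture), it handles the degenerate point $0$ cleanly ($0\in C(p,a)$ iff $0\in B_r(p)$ iff $\|p\|^2<\tfrac{1}{a}$, consistent with \thref{proconf}), and it renders the ``in particular'' clause trivial --- as you correctly observe, the cone containment holds for every $p$, the hypothesis $\|p\|^2\ge\tfrac{1}{a}$ being equivalent to $\|p\|\ge r$, i.e.\ to $0\notin B_r(p)$, which is what makes the cone proper. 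What the paper's computation buys in exchange is the explicit opening angle $\sin\varphi_{max}$, which is reused verbatim in \thref{sphereconf}(i) for the spherical confidence ball; if you adopt your argument, that angle would still need to be extracted afterwards from the tangent-line geometry of $B_r(p)$.
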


\begin{proof}

\begin{figure}[t]
\centering
\begin{tikzpicture}

\clip (-0.5,-0.5) rectangle (10,4.5);
\draw (0,0) circle (8cm);
\draw (10:4.5cm) -- (0,0); 
\draw[dashed] (10:8cm) -- (0,0);
\draw (0,0) -- (30:6.5cm); 
\draw[dashed] (30:6.5cm) -- (30:8cm);
\draw[blue] (10:4.5cm) -- (30:6.5cm) node [pos=0.45,left]{\footnotesize $\sqrt{\frac{1-\left\|q\right\|^2}{a}}$}; 
\node [draw=blue,dashed] at (10:4.5cm) [circle through={(30:6.5cm)}]{};
\draw (10:0.8cm) arc (10:30:0.8cm);
\draw [white] (20:01.2cm) node[black]{\footnotesize{$\varphi(q)$}};
\draw (10:5.864cm) -- (12.35:5.87cm) -- (12.27:6.1091); 
\draw[green!50!black] (30:6.5cm) -- (10:6.10800203511cm) node[pos=0.5,right]{\footnotesize{$\varepsilon$}};
\fill (0,0) circle (1.5pt) node [left] {\small $0$}; 
\fill (10:4.5cm) circle (1.5pt) node [anchor=south east]{\footnotesize $q$}; 
\fill (10:8cm) circle (1.5pt) node [right] {$\pi(q)$}; 
\fill (30:8cm) circle (1.5pt) node [right] {$\pi(p)$}; 
\fill (30:6.5cm) circle (1.5pt) node [above]{\footnotesize $p$};

\end{tikzpicture}
\caption{Sketch for the proof of \hyperref[proball]{Proposition \ref{proball}}.}
\label{Fig.1}
\end{figure}
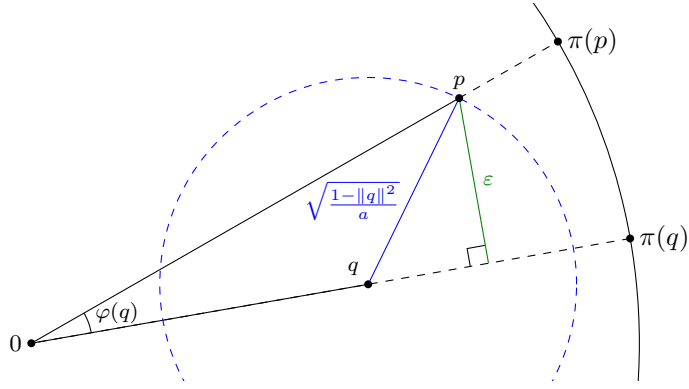
We will bound the angle $\varphi$ between $p$ and arbitrary $q\in C(p,a).$ The angle is given by \begin{equation*}\varphi = \arccos \frac{\left\langle p,q\right\rangle}{\|p\| \|q\|}, \end{equation*} hence we will compute the minimum of $ \cos\varphi = \tfrac{\left\langle p,q\right\rangle}{\|p\| \|q\|}$ with respect to $q$ under the constraint 
\begin{align*} & \|p-q\|^2  = \frac{1-\|q\|^2}{a} \\
\Leftrightarrow\quad & \|p\|^2-2\left\langle p,q\right\rangle + \|q\|^2  = \frac{1-\|q\|^2}{a} \\
\Leftrightarrow\quad & \left\langle p,q\right\rangle  = \frac{1}{2} \left( \|p\|^2 + \|q\|^2 - \frac{1-\|q\|^2}{a} \right),
\end{align*} see \hyperref[Fig.1]{Figure 1}.
Hence, \begin{equation*} \frac{\|p\|^2 + \|q\|^2 - \frac{1-\|q\|^2}{a}}{2\|p\| \|q\|} = \frac{\|p\|^2 + \frac{a+1}{a}\|q\|^2 - \frac{1}{a} }{2\|p\| \|q\|}
\end{equation*} has to be minimized with respect to $\|q\|.$ Starting from 
\begin{equation*}
0 = \frac{\partial}{\partial \|q\|} \left( \frac{\|p\|^2 + \frac{a+1}{a}\|q\|^2 - \frac{1}{a} }{2\|p\| \|q\|} \right) \Bigg|_{q_{max}} = \frac{1}{2\|p\|} \frac{2\frac{a+1}{a} \|q_{max}\|^2 - \|p\|^2-\frac{a+1}{a} \|q_{max}\|^2 +\frac{1}{a}}{\|q_{max}\|^2},
\end{equation*}
one obtains \begin{equation*} \|q_{max}\| = \sqrt{\frac{a}{a+1}\left(\|p\|^2-\frac{1}{a}\right)}, \end{equation*}
and thus \begin{align*}
\varepsilon^2_{max} & = \|p\|^2\sin^2\varphi_{max} \\
& = \|p\|^2 (1 - \cos^2 \varphi_{max}) \\
& = \frac{1}{4\|q_{max}\|^2} \left( 4\|p\|^2\|q_{max}\|^2  - \left( \|p\|^2 + \frac{a+1}{a}\|q_{max}\|^2 - \frac{1}{a} \right)^2 \right) \\
& = \frac{a+1}{4a (\|p\|^2-\tfrac{1}{a})} \left( 4\frac{a}{a+1}\|p\|^2\left(\|p\|^2-\frac{1}{a}\right) - 4 \left(\|p\|^2-\frac{1}{a}\right)^2 \right) \\
& = \frac{a+1}{a (\|p\|^2-\tfrac{1}{a})} \left( \|p\|^2-\frac{1}{a} \right) \left( \frac{a}{a+1}\|p\|^2 - \|p\|^2 + \frac{1}{a} \right) \\
& = -\frac{1}{a}\|p\|^2 + \frac{a+1}{a^2} \\
& = \frac{1}{a}\left( 1-\|p\|^2+\frac{1}{a} \right)
\end{align*}
which finishes the proof.
\end{proof}

\begin{cor}\label{proconf}
A $(1-\alpha)$-confidence set for the extrinsic population mean is given by the projection of the ball around $\bar{Z}_n$ with radius \begin{equation*} \varepsilon = \sqrt{\tfrac{1}{\alpha n} \left( 1-\|\bar{Z}_n\|^2  + \tfrac{1}{\alpha n} \right) }.\end{equation*}
More precisely, the probability that $\pi(B_\varepsilon(\bar Z_n))$ covers the extrinsic mean set is at least $1 - \alpha$.

In addition, $0 \in B_\varepsilon(\bar Z_n)$ if and only if $\Vert \bar Z_n \Vert^2 < \frac{1}{\alpha n}$. In that case, $\pi(B_\varepsilon(\bar Z_n)) = M$.
\end{cor}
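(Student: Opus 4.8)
The plan is to assemble the statement from the two results already established: \thref{thmmarkov} furnishes a confidence set for the Euclidean mean $\E Z$, while \thref{proball} rewrites the projection of that set as the projection of an explicit open ball. The conceptual bridge is the identity $\mu = \pi(\E Z)$ noted in the introduction, which lets a coverage statement for $\E Z$ be pushed forward through $\pi$ to a coverage statement for the extrinsic mean set $\mu$.

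Concretely, I would first restate \thref{thmmarkov} in complementary form as $\P\bigl(\E Z \in C(\bar Z_n, \alpha n)\bigr) \geq 1 - \alpha$; here the constraint $\|\E Z\| \leq 1$ in the definition of $C$ is automatic by convexity, so the only binding condition is $\|\bar Z_n - \E Z\|^2 < \tfrac{1 - \|\E Z\|^2}{\alpha n}$, which is exactly the complement of the event controlled by \thref{thmmarkov}. On the event $\{\E Z \in C(\bar Z_n, \alpha n)\}$, applying $\pi$ and using that images respect inclusions gives $\mu = \pi(\E Z) \subseteq \pi\bigl(C(\bar Z_n, \alpha n)\bigr)$, and \thref{proball} with $p = \bar Z_n$, $a = \alpha n$ identifies this last set with $\pi(B_\varepsilon(\bar Z_n))$ for the stated radius $\varepsilon$. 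Hence $\P\bigl(\mu \subseteq \pi(B_\varepsilon(\bar Z_n))\bigr) \geq 1 - \alpha$, which is the coverage claim.

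For the ``in addition'' part, the equivalence is a one-line computation: $0 \in B_\varepsilon(\bar Z_n)$ means $\|\bar Z_n\|^2 < \varepsilon^2 = \tfrac{1}{\alpha n}\bigl(1 - \|\bar Z_n\|^2 + \tfrac{1}{\alpha n}\bigr)$, and expanding $\varepsilon^2$ and collecting the terms in $\|\bar Z_n\|^2$ reduces this to $\|\bar Z_n\|^2\bigl(1 + \tfrac{1}{\alpha n}\bigr) < \tfrac{1}{\alpha n}\bigl(1 + \tfrac{1}{\alpha n}\bigr)$, i.e.\ $\|\bar Z_n\|^2 < \tfrac{1}{\alpha n}$ after cancelling the positive factor. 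When this holds, $0$ lies in the ball, and since every point of $M \subseteq S^k$ has norm $1$ and is therefore equidistant from the origin, the best-approximation projection of $0$ is all of $M$, i.e.\ $\pi(0) = M$. Thus $M = \pi(\{0\}) \subseteq \pi(B_\varepsilon(\bar Z_n)) \subseteq M$, forcing $\pi(B_\varepsilon(\bar Z_n)) = M$.

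I do not expect a serious obstacle here, since both principal results are in hand; the only points demanding care are bookkeeping ones. First, $\pi$ is in general set-valued, so I must keep all inclusions oriented correctly, pushing the single membership $\E Z \in C$ forward to the set inclusion $\pi(\E Z) \subseteq \pi(C)$ rather than an equality. Second, I should make explicit that $\mu = \pi(\E Z)$ holds as sets, so that ``covering the extrinsic mean set'' is exactly the event whose probability \thref{thmmarkov} bounds.
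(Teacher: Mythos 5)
Your proof is correct and assembles the statement exactly as the paper intends: \thref{thmmarkov} in complementary form gives $\P\bigl(\E Z \in C(\bar Z_n,\alpha n)\bigr) \geq 1-\alpha$, the identity $\mu = \pi(\E Z)$ pushes this through the (set-valued) projection, and \thref{proball} identifies $\pi(C(\bar Z_n,\alpha n))$ with $\pi(B_\varepsilon(\bar Z_n))$. Your algebra for $0 \in B_\varepsilon(\bar Z_n) \Leftrightarrow \|\bar Z_n\|^2 < \tfrac{1}{\alpha n}$ and the observation that $\pi(0) = M$ (all points of $M$ being unit vectors) likewise match the paper's implicit argument, so there is nothing to correct.
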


\begin{rem}
As an alternative to the use of Chebychev's inequality, a multivariate version of Hoeffding's inequality could be used to get confidence sets. After some calculations following the lines of \citep[Ex.~6.3]{BLM}, one can obtain 
\begin{equation*} \P \left(\|\bar{Z}_n - \E Z\| > \tfrac{1}{\sqrt{n}}\left(\sqrt{-2 (1+\|\E Z\|)^2 \log\alpha} + \sqrt{1-\| \E Z \|^2} \right) \right) \leq \alpha. \end{equation*} 
For $\alpha = .05$ and $\E Z = 0,$ this radius is roughly 23\% smaller than the corresponding radius obtained by Chebyshev's inequality. However, the use of Chebyshev's inequality gives smaller confidence sets for $\|\E Z\|$ close to~1 which is important since directional data are often highly concentrated.
\end{rem}

\section{Confidence sets for directional data} \label{sphere}

In this section, let $M = S^k := \left\{x\in\mathbb{R}^{k+1}\,:\,\|x\|=1\right\}$ be the entire $k$-dimensional sphere. Clearly, $\pi$ is now given by \begin{equation*}\pi(x) = \underset{\nu\in S^k}{\text {argmin}} \| \nu-x\| = \frac{x}{\|x\|}\end{equation*} for $x\neq0,$ while $\pi(0)=S^k.$ The following statement is well-known; see e.g.\ \citep{BP03} or \citep{MJ}.

\begin{thm}\label{spheremean}
The extrinsic population mean $\mu$ of $\mathbf{P}^Z$ is unique if and only if $\mathbf{E}Z\neq 0.$ In this case, $\mu=\frac{\mathbf{E}Z}{\|\mathbf{E}Z\|},$ otherwise $\mu=S^k.$ Similarly, the extrinsic sample mean $\hat{\mu}_n$ is unique if and only if $\bar{Z}_n=\frac{1}{n}\sum_{i=1}^n{Z_i} \neq 0,$ whence $\hat{\mu}_n = \frac{\bar{Z}_n}{\|\bar{Z}_n\|},$ otherwise $\hat{\mu}_n = S^k.$
\end{thm}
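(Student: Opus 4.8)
The plan is to exploit the reformulation of the extrinsic mean already established in the introduction, namely that $\mu = \argmin_{\nu\in S^k}\|\E Z - \nu\|^2$ (which follows from the decomposition $\E\|Z-\nu\|^2 = \E\|Z-\E Z\|^2 + \|\E Z-\nu\|^2$), and to reduce the minimisation to a one-line Cauchy--Schwarz argument. First I would expand
\[ \|\E Z - \nu\|^2 = \|\E Z\|^2 - 2\langle \E Z, \nu\rangle + \|\nu\|^2, \]
and use that $\|\nu\|^2 = 1$ for every $\nu \in S^k,$ so that the two outer terms are constant in $\nu.$ Thus minimising $\|\E Z - \nu\|^2$ over $S^k$ is equivalent to maximising the linear functional $\nu \mapsto \langle \E Z, \nu\rangle$ over the sphere.

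Next I would apply the Cauchy--Schwarz inequality $\langle \E Z, \nu\rangle \le \|\E Z\|\,\|\nu\| = \|\E Z\|,$ noting that equality holds precisely when $\nu$ is a nonnegative multiple of $\E Z.$ If $\E Z \neq 0,$ the only unit vector satisfying this is $\nu = \E Z / \|\E Z\|,$ so the maximiser — and hence $\mu$ — is the single point $\E Z/\|\E Z\|.$ If $\E Z = 0,$ the functional vanishes identically, every $\nu \in S^k$ is a maximiser, and $\mu = S^k.$ This simultaneously delivers both the value of $\mu$ and the claimed equivalence: $\mu$ is a singleton exactly when $\E Z \neq 0.$

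Finally, for the statement about the sample mean I would simply invoke the observation from the introduction that the extrinsic sample mean is the extrinsic population mean of the empirical distribution of $Z_1,\dots,Z_n.$ Replacing $\E Z$ by its empirical counterpart $\bar{Z}_n$ throughout the argument above yields the stated dichotomy verbatim, with $\hat{\mu}_n = \bar{Z}_n/\|\bar{Z}_n\|$ when $\bar{Z}_n \neq 0$ and $\hat{\mu}_n = S^k$ otherwise.

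I do not anticipate a genuine obstacle here; the content is essentially the equality case of Cauchy--Schwarz. The only points requiring a little care are the edge case $\E Z = 0$ (where the objective is constant and the whole sphere is optimal) and the observation that it is the \emph{strict} part of Cauchy--Schwarz, rather than mere existence of a minimiser, that is responsible for uniqueness.
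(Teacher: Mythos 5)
Your proof is correct and follows essentially the route the paper itself indicates: the paper gives no proof of this theorem, citing it as well-known, but its introduction already contains your key decomposition $\E\|Z-\nu\|^2 = \E\|Z-\E Z\|^2 + \|\E Z-\nu\|^2$ reducing the problem to the nearest-point projection of $\E Z$ onto $S^k$, together with the observation that this projection is $\E Z/\|\E Z\|$ if and only if $\E Z\neq 0$. Your Cauchy--Schwarz argument correctly supplies the one detail the paper leaves implicit (the equality case pinning down the unique maximiser of $\nu\mapsto\langle\E Z,\nu\rangle$ on the sphere), and your reduction of the sample-mean statement to the empirical distribution is exactly the identification the paper makes in Section 1.
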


Using \thref{proconf}, we immediately obtain confidence sets for the extrinsic population mean.

\begin{prop}\label{sphereconf}
Let $\alpha\in(0,1).$ Then $\pi(C(\bar{Z}_n,\alpha n))$ is a $(1-\alpha)$-confidence set for the extrinsic population mean (set) of directional data.
\begin{enumerate}[(i)]
  \item If $\|\bar{Z}_n\|^2 \geq \tfrac{1}{\alpha n},$ then $\pi(C(\bar{Z}_n,\alpha n))$ is given by \begin{equation*} \left\{ x\in S^k \, : \, \sin \sphericalangle (x,\hat{\mu}_n) < \sqrt{ \tfrac{1-\left\|\bar{Z}_n\right\|^2+\tfrac{1}{\alpha n}}{\alpha n \left\|\bar{Z}_n\right\|^2}} \right\}, \end{equation*} which, in the metric on $S^k$ given by arc length, is the open ball around $\hat{\mu}_n$ with radius \begin{equation*}\arcsin \sqrt{ \tfrac{1-\left\|\bar{Z}_n\right\|^2+\tfrac{1}{\alpha n}}{\alpha n \left\|\bar{Z}_n\right\|^2}} .\end{equation*} Otherwise, $\pi(C(\bar{Z}_n,\alpha n)) = S^k$, the closed ball around $\hat\mu_n$ with radius $\pi$.
	\item If $\E Z=0$, i.e.\ if the extrinsic population mean set equals $S^k,$ then $\|\bar{Z}_n\|^2  \geq \tfrac{1}{\alpha n}$ with probability at most $\alpha,$ so indeed $\pi(C(\bar{Z}_n,\alpha n)) = S^k$ with probability at least $(1-\alpha)$.
	\item If $\E Z \neq 0$, i.e.\ if the extrinsic population mean is unique, then \begin{equation*} \P \bigl( \|\bar{Z}_n\| \leq \tfrac{1}{\alpha n} \bigr) \leq e^{- \tfrac{n}{2} \bigl( \|\E Z\| - \tfrac{1}{\alpha n} \bigr)^2} 
	\,, \end{equation*}
	hence the probability of obtaining the trivial confidence set $\pi(C(\bar{Z}_n,\alpha n)) = S^k$ goes to zero at an exponential rate when $n$ tends to infinity. Furthermore, the radius of the confidence ball in (i) is in $O_\P \bigl(\tfrac{1}{\sqrt{n}}\bigr)$.
\end{enumerate}
\end{prop}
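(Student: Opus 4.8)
The plan is to treat the three parts in turn: parts (i) and (ii) follow almost directly from the machinery already established, while part (iii) requires a genuine concentration argument.

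For part (i), I would start from \thref{proconf}, which identifies the confidence set as $\pi(B_\varepsilon(\bar Z_n))$ with $\varepsilon = \sqrt{\tfrac{1}{\alpha n}(1-\|\bar Z_n\|^2 + \tfrac{1}{\alpha n})}$, together with the computation in \thref{proball}, which already yields the maximal angle: with $p = \bar Z_n$ and $a = \alpha n$ we have $\varepsilon_{\max}^2 = \|p\|^2 \sin^2\varphi_{\max}$, so that $\sin\varphi_{\max} = \sqrt{\tfrac{1-\|\bar Z_n\|^2 + \frac{1}{\alpha n}}{\alpha n \|\bar Z_n\|^2}}$. The key observation is that for $M = S^k$ the projection $\pi(x) = x/\|x\|$ is radial and hence preserves the angle at the origin, i.e.\ $\sphericalangle(\pi(q), \hat\mu_n) = \sphericalangle(q, p)$ for every $q \neq 0$. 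When $\|\bar Z_n\|^2 \ge \tfrac{1}{\alpha n}$ we have $0 \notin B_\varepsilon(\bar Z_n)$ (again by \thref{proconf}), so $\pi$ is well defined on the ball; I would then argue by rotational symmetry about the axis through $0$ and $p$ that every angle strictly below $\varphi_{\max}$ is attained in each azimuthal direction, whereas $\varphi_{\max}$ itself is attained only on the (excluded) boundary of $C(\bar Z_n, \alpha n)$. This identifies the image as exactly the open geodesic cap $\{x : \sphericalangle(x,\hat\mu_n) < \varphi_{\max}\}$, i.e.\ the open arc-length ball of radius $\arcsin\sqrt{\tfrac{1-\|\bar Z_n\|^2 + \frac{1}{\alpha n}}{\alpha n \|\bar Z_n\|^2}}$. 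The degenerate case $\|\bar Z_n\|^2 < \tfrac{1}{\alpha n}$ is immediate from \thref{proconf}, since then $0 \in B_\varepsilon(\bar Z_n)$ and $\pi(B_\varepsilon(\bar Z_n)) = S^k$.

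Part (ii) is the easiest: setting $\E Z = 0$ in \thref{thmmarkov} collapses the right-hand side to $\tfrac{1}{\alpha n}$, giving $\P(\|\bar Z_n\|^2 \ge \tfrac{1}{\alpha n}) \le \alpha$. By part (i), the complementary event $\|\bar Z_n\|^2 < \tfrac{1}{\alpha n}$ produces the trivial set $S^k$, which equals the entire extrinsic mean set when $\E Z = 0$ and therefore covers it; this happens with probability at least $1 - \alpha$.

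For the probability bound in part (iii), the conceptual reason the exponent $\|\E Z\| - \tfrac{1}{\alpha n}$ appears is the reverse triangle inequality: on $\{\|\bar Z_n\| \le \tfrac{1}{\alpha n}\}$ one has $\|\bar Z_n - \E Z\| \ge \|\E Z\| - \tfrac{1}{\alpha n}$, the regime of interest once $n$ is large enough that $\|\E Z\| > \tfrac{1}{\alpha n}$. The cleanest route, however, is to apply the bounded differences (McDiarmid) inequality directly to $g(Z_1,\dots,Z_n) = \|\bar Z_n\|$: since each $Z_i$ lies on $S^k$, replacing one argument changes $g$ by at most $\tfrac{2}{n}$, so $\sum_i c_i^2 = \tfrac{4}{n}$ and the lower tail reads $\P(g \le \E g - s) \le \exp(-\tfrac{n}{2}s^2)$. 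Jensen's inequality supplies the lower bound $\E g = \E\|\bar Z_n\| \ge \|\E\bar Z_n\| = \|\E Z\|$; choosing $s = \E g - \tfrac{1}{\alpha n} \ge \|\E Z\| - \tfrac{1}{\alpha n} \ge 0$ then yields $\P(\|\bar Z_n\| \le \tfrac{1}{\alpha n}) \le \exp(-\tfrac{n}{2}(\|\E Z\| - \tfrac{1}{\alpha n})^2)$. This concentration step — choosing the right functional and bounding its mean from below — is the main obstacle; everything else is bookkeeping, and I would note in passing that the stated inequality is informative precisely in the regime $\tfrac{1}{\alpha n} < \|\E Z\|$. Finally, for the $O_\P(1/\sqrt n)$ rate of the radius in (i), I would use that $\arcsin$ is convex on $[0,1]$ and hence bounded by $\tfrac{\pi}{2}$ times its argument, reducing the claim to $\tfrac{1-\|\bar Z_n\|^2 + \frac{1}{\alpha n}}{\alpha n \|\bar Z_n\|^2} = O_\P(\tfrac1n)$. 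This holds because the numerator is bounded (as $0 \le \|\bar Z_n\| \le 1$) while $\|\bar Z_n\|^2 \to \|\E Z\|^2 > 0$ in probability by the law of large numbers, so $1/\|\bar Z_n\|^2 = O_\P(1)$ and the whole fraction is of order $1/n$.
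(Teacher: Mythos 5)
Your proposal is correct. Parts (i) and (ii) follow the paper's own proof essentially verbatim: (i) combines \thref{proconf} with the angle computation already done in \thref{proball} and the fact that the radial projection $x \mapsto x/\|x\|$ preserves angles at the origin (your explicit rotational-symmetry argument showing the image is \emph{exactly} the open cap, with $\varphi_{\max}$ attained only on the excluded boundary, is more detailed than the paper, which simply asserts the angle bound transfers under $\pi$), and (ii) is \thref{thmmarkov} with $\E Z = 0$. For the exponential bound in (iii) you take a genuinely different route: the paper projects onto the fixed direction $u = \E Z/\|\E Z\|$, observes $\|\bar Z_n\| \geq |\langle \bar Z_n, u\rangle|$, and applies the scalar Hoeffding inequality to the i.i.d.\ variables $\langle Z_i, u\rangle \in [-1,1]$ with mean $\|\E Z\|$, whereas you apply McDiarmid's bounded-differences inequality directly to $g = \|\bar Z_n\|$ (with $c_i = 2/n$, so $\sum_i c_i^2 = 4/n$) and use Jensen to get $\E\|\bar Z_n\| \geq \|\E Z\|$. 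The two routes yield the identical exponent $e^{-\frac{n}{2}\left(\|\E Z\| - \frac{1}{\alpha n}\right)^2}$, since one-sided Hoeffding with range $2$ and McDiarmid with these $c_i$ produce the same constant; your version is slightly more robust (it applies to any functional with bounded differences and cleanly uses only the lower tail, avoiding the paper's insertion of an absolute value, which read literally as a two-sided bound would cost a factor $2$), while the paper's reduction to a one-dimensional Hoeffding bound is more elementary. Note that both arguments tacitly require $\|\E Z\| \geq \frac{1}{\alpha n}$, since otherwise the deviation threshold is negative and the concentration step does not apply as stated; you flag this regime explicitly, which the paper does not. Finally, your derivation of the $O_\P\bigl(\tfrac{1}{\sqrt n}\bigr)$ rate via $\arcsin x \leq \tfrac{\pi}{2}x$ on $[0,1]$, the bounded numerator, and $1/\|\bar Z_n\|^2 = O_\P(1)$ by the law of large numbers correctly fills in what the paper dismisses as following ``directly from (i)''.
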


\begin{proof} 
For \emph{(i)}, see  \thref{proconf}, and note that the projection $\pi$ preserves the angle between two points. If $\|\bar{Z}_n\|^2 \geq \tfrac{1}{\alpha n},$ then the boundary for the angle between a point in the $(1-\alpha)$-confidence set for the Euclidean mean and the Euclidean sample mean is given by \begin{equation*}\sin \varphi_{max} = \frac{\sqrt{\tfrac{1}{\alpha n} \left( 1-\|\bar{Z}_n\|^2+\tfrac{1}{\alpha n} \right) } }{ \|\bar{Z}_n\| }.\end{equation*} Hence, this is the boundary for the angle after projecting to $S^k.$ If $\|\bar{Z}_n\|^2 < \tfrac{1}{\alpha n},$ then $0\in C(\bar{Z}_n,\alpha n)$ whence $\pi(C(\bar{Z}_n,\alpha n)) = S^k.$ 

\emph{(ii)} follows directly from \thref{thmmarkov}.

For \emph{(iii)}, we use Hoeffding's inequality, cf.\ \citep{Hoe}: \begin{align*} \P \bigl( \| \bar{Z}_n \| \leq \frac{1}{\alpha n} \bigr) & \leq \P \bigl( | \left\langle \bar{Z}_n , \tfrac{\E Z}{\|\E Z\|} \right\rangle | \leq \frac{1}{\alpha n} \bigr) \\
& \leq \P \bigl( \left| \left\langle \bar{Z}_n , \tfrac{\E Z}{\|\E Z\|} \right\rangle -\| \E Z\| \right| \geq \| \E Z \| - \frac{1}{\alpha n} \bigr) \\
& \leq e^{- \tfrac{n}{2} \left( \| \E Z \| - \tfrac{1}{\alpha n} \right)^2}
\end{align*}
Then, the second statement of \emph{(iii)} follows directly from \emph{(i)}.
\end{proof} 


For data on the circle $S^1$ however, we suggest to construct confidence sets using Hoeffding's inequality as in \citep{HKW}.

\begin{rem}
Using the central limit theorem for the extrinsic mean, cf.\ \citep{JS}, one can obtain asymptotic confidence sets \begin{equation*}
 \left\{ x\in S^k: \sphericalangle (x,\hat{\mu}_n) < q_{1-\tfrac{\alpha}{2}} \sqrt{\frac{\tfrac{1}{n} \sum_{i=1}^n{\sin^2 \sphericalangle (Z_i,\bar{Z}_n)}}{n\|\bar{Z}_n\|^2}} \right\}
\end{equation*} where $q_{1-\tfrac{\alpha}{2}}$ is the $(1-\tfrac{\alpha}{2})$-quantil of the standard normal distribution, but these do not guarantee coverage for finite $n$ which is problematic even for considerably large sample sizes, cf.\ \citep{HKW}.
\end{rem}


\section{Confidence sets for data on Grassmann manifolds}\label{grassmann}

In this section, let $\gr(m,\C^{d+1}),$ $m<d+1,$ be the Grassmann manifold of $m$-dimensional complex subspaces of $\C^{d+1}.$ Analogous results hold for real Grassmann manifolds. The projective spaces $\cp^d=\gr(1,\C^{d+1})$ and $\rp^d=\gr(1,\R^{d+1})$ are special cases of Grassmann manifolds\footnote{Note that $\rp^1\cong S^1$ and $\cp^1\cong S^2.$ We suggest using the results of \hyperref[sphere]{Section 3} to obtain confidence sets from data in these spaces since a lower dimensional Euclidean space is used there.}.

We embed the Grassmann manifold $\gr(m,\C^{d+1})$ into the $(d+1)^2$-dimensional Euclidean space $\big(\herm(d+1), tr(\cdot ,\cdot)\big)$ of Hermitian matrices by choosing an orthonormal basis $\{u_1,\ldots,u_m\}$ for a given subspace $U\in\gr(m,\C^{d+1})$ and mapping it to \begin{equation*}
\iota(U) = \frac{1}{\sqrt{m}}\sum_{i=1}^m {u_i u_i^*}
\end{equation*} which is independent of the choice of the basis, making the mapping well-defined; $\iota:\gr(m,\C^{d+1})\rightarrow\herm(d+1)$ is called Veronese-Whitney embedding. As a mapping, $\iota(U)$ is actually the projection from $\C^{k+1}$ onto the subspace $U,$ so $\iota$ is injective, allowing us to identify $\gr(m,\C^{d+1})$ with $M=\iota(\gr(m,\C^{d+1})).$ Note that \thref{proconf} holds for data in $\gr(m,\C^{k+1})$ since $\|\iota(U)\|_F= \sqrt{tr(\iota(U)\iota(U)^*)}=1$ for all $U\in \gr(m,\C^{d+1})$ with $\|\cdot\|_F$ being the Frobenius norm, i.e.\ the standard Euclidean norm on $\C^{(d+1)\times(d+1)}$ viewed as $\C^{(d+1)^2}.$ Therefore, $M\subseteq S^k$ for $k=(d+1)^2-1,$ allowing us to apply the results of \hyperref[GeneralSetting]{Section2}.

The Frobenius norm then equips $\gr(m,\C^{k+1})$ with an extrinsic metric $d$ defined by \begin{align*}
d^2 (U,V) & = \| \iota(U)-\iota(V) \|_F^2 \\
& =\frac{1}{m} \biggl( \sum_{i=1}^m tr(u_i u_i^*) + \sum_{i=1}^m tr(v_i v_i^*) - 2\sum_{i,j=1}^m tr(u_i u_i^* v_j v_j^*) \biggr) \\
& =2(1- \frac{1}{m} \sum_{i,j=1}^m |v_j^*u_i|^2) \\
& = 2(1 - \| \iota(U)\iota(V) \|_F^2). 
\end{align*}

As always, $\pi:\herm(k+1)\rightarrow M$ shall be the projection in the sense of best approximation. Here, $\pi$ maps a matrix to the span of $m$ linear independent eigenvectors to its largest eigenvalues. The image of matrix under $\pi$ is unique if and only if the direct sum of the eigenspaces corresponding to the $m$ largest eigenvalues (counted with multiplicities) is $m$-dimensional.

\begin{prop}\label{grprojballuniq}
Let $A$ be a Hermitian matrix with eigenvalues $\lambda_1 \geq\ldots\geq \lambda_m > \lambda_{m+1}\geq\ldots\geq\lambda_{k+1},$ so that $\pi(A)$ is unique. If $\lambda_m - \lambda_{m+1} \geq \sqrt{2}\varepsilon>0,$ then $\pi$ is unique on $B_\varepsilon(A)\subset \herm(k+1).$
\end{prop}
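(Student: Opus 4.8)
The plan is to reduce the uniqueness of $\pi$ at a perturbed matrix $B\in B_\varepsilon(A)$ to a statement about the separation of its $m$-th and $(m+1)$-st eigenvalues, and then to control this separation via Weyl's perturbation inequality. Recall from the discussion preceding the proposition that $\pi(B)$ is unique precisely when the $m$ largest eigenvalues of $B$ (counted with multiplicity) span an $m$-dimensional space, i.e.\ exactly when $\lambda_m(B)>\lambda_{m+1}(B),$ writing $\lambda_1(B)\geq\cdots\geq\lambda_{k+1}(B)$ for the ordered eigenvalues. So I would fix an arbitrary $B\in B_\varepsilon(A),$ set $E:=B-A$ (a Hermitian matrix with $\|E\|_F<\varepsilon$), and aim to show $\lambda_m(B)>\lambda_{m+1}(B).$

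First I would invoke Weyl's inequality in the two-sided form $\lambda_i(A)+\lambda_{\min}(E)\leq\lambda_i(B)\leq\lambda_i(A)+\lambda_{\max}(E),$ where $\lambda_{\max}(E)$ and $\lambda_{\min}(E)$ are the largest and smallest eigenvalues of $E.$ Applying the lower bound at $i=m$ and the upper bound at $i=m+1$ and subtracting yields
\begin{equation*}
\lambda_m(B)-\lambda_{m+1}(B)\geq\bigl(\lambda_m-\lambda_{m+1}\bigr)-\bigl(\lambda_{\max}(E)-\lambda_{\min}(E)\bigr).
\end{equation*}
It then remains to bound the spread $\lambda_{\max}(E)-\lambda_{\min}(E)$ of the perturbation.

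The key step, and the one responsible for the constant $\sqrt2$ in the hypothesis, is the elementary bound $\lambda_{\max}(E)-\lambda_{\min}(E)\leq\sqrt2\,\|E\|_F.$ This follows from $(\lambda_{\max}(E)-\lambda_{\min}(E))^2\leq 2\bigl(\lambda_{\max}(E)^2+\lambda_{\min}(E)^2\bigr)\leq 2\sum_i\lambda_i(E)^2=2\|E\|_F^2,$ using $(a-b)^2\leq 2(a^2+b^2)$ together with the fact that $\|E\|_F^2$ equals the sum of squared eigenvalues. I would stress here that the naive route---bounding each eigenvalue's displacement separately by the operator norm and then by $\|E\|_F$---costs a factor $2$ rather than $\sqrt2$ and is therefore too weak; the improvement comes from treating the downward motion of $\lambda_m$ and the upward motion of $\lambda_{m+1}$ \emph{jointly} through the single quantity $\lambda_{\max}(E)-\lambda_{\min}(E).$

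Combining the two estimates with the hypothesis $\lambda_m-\lambda_{m+1}\geq\sqrt2\,\varepsilon$ and with $\|E\|_F<\varepsilon$ gives $\lambda_m(B)-\lambda_{m+1}(B)\geq\sqrt2\,\varepsilon-\sqrt2\,\|E\|_F>0,$ so $B$ is uniquely projected; since $B\in B_\varepsilon(A)$ was arbitrary, $\pi$ is unique on all of $B_\varepsilon(A).$ I expect the only genuine subtlety to be stating Weyl's inequality in the exact form needed and justifying the spread bound cleanly; the remainder is bookkeeping.
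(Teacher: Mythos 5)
Your proof is correct, and it takes a genuinely different route from the paper's. The paper argues by contradiction via the Wielandt--Hoffman theorem (its \thref{wielandt}): if some $A+E\in B_\varepsilon(A)$ had $\sigma_m=\sigma_{m+1}$, then the two-term truncation $(\lambda_m-\sigma_m)^2+(\lambda_{m+1}-\sigma_{m+1})^2$ of the Wielandt--Hoffman sum is minimized at $\sigma_m=\sigma_{m+1}=\tfrac{\lambda_m+\lambda_{m+1}}{2}$, where it equals $\tfrac{(\lambda_m-\lambda_{m+1})^2}{2}\geq\varepsilon^2$, contradicting $\|E\|_F<\varepsilon$. You instead give a direct argument from the two-sided Weyl inequality together with the spread bound $\lambda_{\max}(E)-\lambda_{\min}(E)\leq\sqrt{2}\,\|E\|_F$; both steps are correct as you state them (the spread bound needs $\lambda_{\max}(E)$ and $\lambda_{\min}(E)$ to occupy distinct slots in the spectrum, which holds since $k+1\geq m+1\geq 2$). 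It is worth noting that the constant $\sqrt{2}$ arises from the same mechanism in both proofs --- the displacements of two eigenvalues are controlled jointly through a two-squares inequality, where one-at-a-time bounds would lose a factor $2$, exactly as you observe --- but your version is direct rather than by contradiction and delivers the quantitative gap estimate $\sigma_m-\sigma_{m+1}\geq(\lambda_m-\lambda_{m+1})-\sqrt{2}\,\|E\|_F$, which is strictly more information than the paper's bare non-degeneracy conclusion. That extra information is of the same kind the paper must extract from Wielandt--Hoffman a second time in the proof of \thref{grprojball} (there in the form $\sigma_m-\lambda_m>-\varepsilon$), so your route would also streamline that subsequent step; what the paper's approach buys in exchange is economy of tools, reusing the one perturbation theorem it has already stated, whereas yours trades that for a more elementary and sharper intermediate statement based on Weyl's inequality.
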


For the proof, we need the theorem of Wielandt-Hoffman, cf.\ \citep{WH}:

\begin{thm}\label{wielandt} (Wielandt-Hoffman)
If $A$ and $A+E$ are normal matrices with eigenvalues $\lambda_1\geq\ldots\geq\lambda_{k+1}$ respectively $\sigma_1\geq\ldots\geq\sigma_{k+1}$, then $$\sum_{j=1}^{k+1}{\left(\lambda_i - \sigma_i\right)^2}\leq\|E\|_F^2.$$
\end{thm}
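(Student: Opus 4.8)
The plan is to reduce the inequality to a linear optimisation over the set of doubly stochastic matrices. Since the prescribed ordering $\lambda_1 \geq \dots \geq \lambda_{k+1}$ (and likewise for the $\sigma_i$) only makes sense for real eigenvalues, the normal matrices in question are in fact Hermitian, and I will use this throughout. First I would unitarily diagonalise both matrices, writing $A = U D_\lambda U^*$ and $A + E = V D_\sigma V^*$ with $D_\lambda = \mathrm{diag}(\lambda_1,\dots,\lambda_{k+1})$, $D_\sigma = \mathrm{diag}(\sigma_1,\dots,\sigma_{k+1})$ and $U, V$ unitary. Since $E = V D_\sigma V^* - U D_\lambda U^*$, conjugating by $U$ and using the unitary invariance of the Frobenius norm gives
\begin{equation*}
\|E\|_F^2 = \bigl\| W D_\sigma W^* - D_\lambda \bigr\|_F^2, \qquad W := U^* V \text{ unitary}.
\end{equation*}

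Next I would expand this squared norm via $\|X - Y\|_F^2 = \|X\|_F^2 + \|Y\|_F^2 - 2\,\mathrm{Re}\,tr(X^* Y)$. Since $\|W D_\sigma W^*\|_F = \|D_\sigma\|_F$ and the eigenvalues are real, a direct computation of the cross term $tr(W D_\sigma W^* D_\lambda)$ in terms of the entries $w_{aj}$ of $W$ yields
\begin{equation*}
\|E\|_F^2 = \sum_{i=1}^{k+1} \lambda_i^2 + \sum_{i=1}^{k+1} \sigma_i^2 - 2\sum_{a,j=1}^{k+1} \lambda_a \sigma_j \, s_{aj}, \qquad s_{aj} := |w_{aj}|^2.
\end{equation*}
The crucial structural observation is that, because the rows and columns of the unitary $W$ have norm one, the matrix $S = (s_{aj})$ has non-negative entries with all row and column sums equal to $1$, i.e.\ $S$ is doubly stochastic. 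Comparing with $\sum_i (\lambda_i - \sigma_i)^2 = \sum_i \lambda_i^2 + \sum_i \sigma_i^2 - 2\sum_i \lambda_i \sigma_i$, the claim reduces to the single inequality
\begin{equation*}
\sum_{a,j=1}^{k+1} \lambda_a \sigma_j \, s_{aj} \;\leq\; \sum_{i=1}^{k+1} \lambda_i \sigma_i .
\end{equation*}

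To establish this, I would observe that the left-hand side is a linear functional of $S$ on the compact convex Birkhoff polytope of doubly stochastic matrices, so its maximum is attained at an extreme point. By the Birkhoff--von Neumann theorem the extreme points are exactly the permutation matrices $P_\pi$, for which the functional equals $\sum_i \lambda_i \sigma_{\pi(i)}$. Finally, since both $(\lambda_i)$ and $(\sigma_i)$ are arranged in decreasing order, the rearrangement inequality shows $\sum_i \lambda_i \sigma_{\pi(i)} \leq \sum_i \lambda_i \sigma_i$ for every permutation $\pi$, with equality for $\pi = \mathrm{id}$. This bounds $\sum_{a,j} \lambda_a \sigma_j s_{aj}$ by $\sum_i \lambda_i \sigma_i$, and hence proves $\sum_i (\lambda_i - \sigma_i)^2 \leq \|E\|_F^2$. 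The main obstacle is the passage from the specific $S$ arising from $W$ to the optimisation over \emph{all} doubly stochastic matrices: one must recognise that $S$ is doubly stochastic and that it suffices to bound the functional by its maximum over the entire polytope, after which the combination of Birkhoff--von Neumann and the rearrangement inequality finishes the argument. Both of these auxiliary facts are classical and would simply be cited.
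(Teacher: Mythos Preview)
The paper does not prove this statement at all: it introduces the Wielandt--Hoffman theorem with ``cf.\ \citep{WH}'' and then uses it as a black box in the proof of \thref{grprojballuniq} and elsewhere. There is therefore nothing to compare your argument against.

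Your proposed proof is the standard one for the Hermitian case and is correct. You are also right to point out that the formulation in the paper, with the eigenvalues real and listed in decreasing order, tacitly restricts to Hermitian (rather than arbitrary normal) matrices; this is all that is needed here, since the Wielandt--Hoffman theorem is only applied to Hermitian $\bar Z_n$ and $\E Z$. The reduction via unitary conjugation to $\|WD_\sigma W^*-D_\lambda\|_F^2$, the identification of $(|w_{aj}|^2)$ as doubly stochastic, and the combination of Birkhoff--von~Neumann with the rearrangement inequality to bound the bilinear term are all sound. No gaps.
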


\begin{proof2}
Let $\sigma_1\geq\ldots\geq\sigma_{k+1}$ be the eigenvalues of $A+E\in B_\varepsilon(A).$
It has to be shown that $ \sigma_m-\sigma_{m+1} > 0 $ for all $A+E\in B_\varepsilon (A),$ i.e.\ for all $E\in\herm(k+1)$ with $ \|E\|_F < \varepsilon. $

Suppose that $\sigma_m = \sigma_{m+1}.$ Then 
\begin{equation*}\left(\lambda_m-\sigma_m\right)^2+\left(\lambda_{m+1}-\sigma_{m+1}\right)^2\end{equation*}
 is minimal for 
 $\sigma_m=\sigma_{m+1}=\frac{\lambda_m+\lambda_{m+1}}{2}.$ 
 Hence, \begin{equation*} \|E\|_F^2 = \sum_{j=1}^{k+1}{\left(\lambda_i - \sigma_i\right)^2} \geq \left(\lambda_m-\sigma_m\right)^2+\left(\lambda_{m+1}-\sigma_{m+1}\right)^2 \geq \varepsilon^2,\end{equation*} but that contradicts the  \hyperref[wielandt]{Wielandt-Hoffman Theorem} since $\|E\|_F<\varepsilon.$
\end{proof2}

To bound the projection of the ball in \thref{proconf}, we use the Davis-Kahan $\sin \theta$ Theorem (cf.\ \citep{DK}), and conclude the following statements:

\begin{thm}\label{daviskahan}
Let $A$ and $A+E$ be Hermitian matrices with eigenvalues $\lambda_1 \geq\ldots\geq \lambda_m > \lambda_{m+1}\geq\ldots\geq\lambda_{k+1}$ resp.\ $\sigma_1 \geq\ldots\geq \sigma_m > \sigma_{m+1}\geq\ldots\geq\sigma_{k+1}$ with $\sigma_m > \lambda_{m+1}.$ Then, \begin{equation*}
(\sigma_m-\lambda_{m+1})\cdot d( \pi(A),\pi(A+E) ) \leq \sqrt{2} \| E\|_F. \end{equation*}
\end{thm}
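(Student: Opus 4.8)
The plan is to reduce the statement to the Frobenius-norm version of the Davis--Kahan $\sin\Theta$ theorem announced in the text, and then to translate the resulting bound on principal angles back into the extrinsic metric $d$. Write $U$ and $V$ for the $m$-dimensional spans of eigenvectors belonging to the $m$ largest eigenvalues of $A$ and $A+E$, so that $\pi(A)=\iota(U)$ and $\pi(A+E)=\iota(V)$. Since $\iota(U)=\tfrac1{\sqrt m}P_U$ with $P_U$ the orthogonal projection onto $U$ (and likewise $\iota(V)=\tfrac1{\sqrt m}P_V$), the extrinsic distance satisfies $d(\pi(A),\pi(A+E))^2=\tfrac1m\|P_U-P_V\|_F^2$.

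First I would record the elementary identity linking this quantity to the principal angles $\Theta$ between $U$ and $V$. Using that $P_U,P_V$ are self-adjoint idempotents with $\operatorname{tr}P_U=\operatorname{tr}P_V=m$, one gets $\|P_U-P_V\|_F^2=2m-2\operatorname{tr}(P_UP_V)=2\|P_{U^\perp}P_V\|_F^2$, and the nonzero singular values of $P_{U^\perp}P_V$ are precisely the $\sin\theta_\ell$. Hence $d(\pi(A),\pi(A+E))=\sqrt{2/m}\,\|P_{U^\perp}P_V\|_F=\sqrt{2/m}\,\|\sin\Theta\|_F$.

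Next comes the Davis--Kahan step, which I expect to be the delicate part. In its residual form the $\sin\Theta$ theorem states that for a spectral projection $P$ of a Hermitian matrix $S$ whose spectrum on $\operatorname{ran}P$ lies in $[a,\infty)$, and a complementary spectral projection $Q$ of a Hermitian matrix $T$ whose spectrum on $\operatorname{ran}Q$ lies in $(-\infty,b]$ with $a>b$, one has $\|PQ\|_F\le\|P(S-T)Q\|_F/(a-b)\le\|S-T\|_F/(a-b)$; this follows by solving the Sylvester relation $P(S-T)Q=SPQ-PQT$, whose operator $X\mapsto SX-XT$ is bounded below by $a-b$ on matrices with $X=PXQ$. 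The bookkeeping that makes the gap come out as $\sigma_m-\lambda_{m+1}$ rather than $\lambda_m-\sigma_{m+1}$ is the crux: I would take $S=A+E$ with $P=P_V$ (eigenvalues $\ge\sigma_m$) and $T=A$ with $Q=P_{U^\perp}$ (eigenvalues $\le\lambda_{m+1}$), so that $S-T=E$ and the separation is exactly $\sigma_m-\lambda_{m+1}>0$ by hypothesis. This yields $\|P_{U^\perp}P_V\|_F\le\|E\|_F/(\sigma_m-\lambda_{m+1})$, using $\|P_VEP_{U^\perp}\|_F\le\|E\|_F$ since orthogonal projections have operator norm at most $1$.

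Combining the two displays gives $d(\pi(A),\pi(A+E))\le\sqrt{2/m}\,\|E\|_F/(\sigma_m-\lambda_{m+1})\le\sqrt2\,\|E\|_F/(\sigma_m-\lambda_{m+1})$, which is the claim after multiplying through by $\sigma_m-\lambda_{m+1}$; the extra factor $\sqrt{2/m}$ shows the stated constant is tight only for $m=1$. The main obstacle is organizational rather than analytical: pairing the top-$m$ invariant subspace of the \emph{perturbed} matrix with the bottom invariant subspace of the \emph{unperturbed} one so that the relevant spectral gap is precisely $\sigma_m-\lambda_{m+1}$, and keeping track of the residual estimate.
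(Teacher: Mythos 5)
Your proof is correct, and it is necessarily a different route from the paper's, because the paper gives no proof of this theorem at all: it simply states it as a consequence of the Davis--Kahan $\sin\Theta$ theorem \citep{DK}, leaving implicit both the translation into the extrinsic metric $d$ and the asymmetric choice of subspaces that makes the gap come out as $\sigma_m-\lambda_{m+1}$. You supply exactly these missing pieces: (a) the identity $d(\pi(A),\pi(A+E))^2=\tfrac1m\|P_U-P_V\|_F^2=\tfrac2m\|P_{U^\perp}P_V\|_F^2$, which follows correctly from $\operatorname{tr}P_U=\operatorname{tr}P_V=m$ and idempotence (and is consistent with the paper's formula $d^2=2\bigl(1-\tfrac1m\operatorname{tr}(P_UP_V)\bigr)$ from Section 4); (b) the pairing of the top-$m$ spectral projection of the \emph{perturbed} matrix $A+E$ (spectrum in $[\sigma_m,\infty)$) with the bottom spectral projection of the \emph{unperturbed} $A$ (spectrum in $(-\infty,\lambda_{m+1}]$), where the hypotheses $\lambda_m>\lambda_{m+1}$ and $\sigma_m>\sigma_{m+1}$ guarantee both projections are well defined and $\sigma_m>\lambda_{m+1}$ gives the positive separation; and (c) a self-contained Sylvester-equation proof of the residual bound, in which the relation $P(S-T)Q=SPQ-PQT$ and the lower bound $a-b$ for $X\mapsto SX-XT$ on matrices with $X=PXQ$ (diagonalize $PSP$ and $QTQ$ and divide entrywise) are standard and correctly applied, yielding $\|P_{U^\perp}P_V\|_F\le\|E\|_F/(\sigma_m-\lambda_{m+1})$. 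What your argument buys beyond the citation is twofold: the proof becomes elementary and self-contained (on a par with the paper's use of Wielandt--Hoffman for \thref{grprojballuniq}), and you actually establish the sharper constant $\sqrt{2/m}$ in place of $\sqrt{2}$, showing the paper's bound is lossy by a factor $\sqrt{m}$ for $m>1$ — an improvement that would propagate to a smaller radius $\delta_n$ in \thref{grconfreg} for Grassmannians beyond the projective-space case $m=1$.
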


\begin{prop}\label{grprojball}
Let $A$ be a Hermitian matrix with eigenvalues $\lambda_1 \geq\ldots\geq \lambda_m > \lambda_{m+1}\geq\ldots\geq\lambda_{k+1}.$ If $\lambda_m-\lambda_{m+1} \geq\sqrt{2}\varepsilon>0,$ then \begin{equation*}\pi\left(B_\varepsilon(A)\right)\subseteq B_\delta(\pi(A))\end{equation*}
in $\gr(m,\C^{k+1})$ where $\delta=\frac{\sqrt{2}\varepsilon}{\lambda_m-\lambda_{m+1}-\varepsilon}.$ 
\end{prop}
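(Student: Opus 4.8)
The plan is to fix an arbitrary element $A+E \in B_\varepsilon(A)$, so that $\|E\|_F < \varepsilon$, and to bound the extrinsic distance $d(\pi(A), \pi(A+E))$ uniformly by $\delta$. Since $\lambda_m - \lambda_{m+1} \geq \sqrt{2}\varepsilon > 0$, \thref{grprojballuniq} guarantees that $\pi$ is well-defined (single-valued) on all of $B_\varepsilon(A)$, so $\pi(A+E)$ is a genuine point of $\gr(m,\C^{k+1})$ and the quantity $d(\pi(A),\pi(A+E))$ is meaningful.

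First I would control the $m$-th eigenvalue $\sigma_m$ of $A+E$ by means of the Wielandt-Hoffman theorem (\thref{wielandt}): from $\sum_{j}(\lambda_j - \sigma_j)^2 \leq \|E\|_F^2$ one reads off $|\lambda_m - \sigma_m| \leq \|E\|_F < \varepsilon$, hence $\sigma_m > \lambda_m - \varepsilon$. Consequently,
\begin{equation*}
\sigma_m - \lambda_{m+1} > \lambda_m - \lambda_{m+1} - \varepsilon \geq (\sqrt{2}-1)\varepsilon > 0,
\end{equation*}
which both verifies the hypothesis $\sigma_m > \lambda_{m+1}$ needed for the Davis-Kahan theorem and furnishes a strictly positive lower bound for the spectral gap that will appear in the denominator.

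Next I would invoke the Davis-Kahan $\sin\theta$ theorem (\thref{daviskahan}) for $A$ and $A+E$, giving
\begin{equation*}
(\sigma_m - \lambda_{m+1}) \cdot d(\pi(A), \pi(A+E)) \leq \sqrt{2}\|E\|_F.
\end{equation*}
Dividing by the positive quantity $\sigma_m - \lambda_{m+1}$ and using the two bounds $\sigma_m - \lambda_{m+1} > \lambda_m - \lambda_{m+1} - \varepsilon$ and $\|E\|_F < \varepsilon$ established above, I obtain
\begin{equation*}
d(\pi(A), \pi(A+E)) \leq \frac{\sqrt{2}\|E\|_F}{\sigma_m - \lambda_{m+1}} < \frac{\sqrt{2}\varepsilon}{\lambda_m - \lambda_{m+1} - \varepsilon} = \delta.
\end{equation*}
As $A+E$ was an arbitrary point of $B_\varepsilon(A)$, this shows $\pi(B_\varepsilon(A)) \subseteq B_\delta(\pi(A))$.

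The main obstacle I anticipate is combining the two estimates in the correct direction: both the factor $\|E\|_F$ in the numerator and the gap $\sigma_m - \lambda_{m+1}$ in the denominator depend on the unknown perturbation $E$, so one must simultaneously bound the numerator from above by $\varepsilon$ and the denominator from below by $\lambda_m - \lambda_{m+1} - \varepsilon$. The latter is precisely where the Wielandt-Hoffman estimate enters, while the hypothesis $\lambda_m - \lambda_{m+1} \geq \sqrt{2}\varepsilon$ is what keeps that denominator strictly positive. Everything else is a direct application of the two cited perturbation theorems.
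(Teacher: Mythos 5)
Your proposal is correct and follows essentially the same route as the paper's own proof: unique projectibility via \thref{grprojballuniq}, the Wielandt--Hoffman bound $|\sigma_m - \lambda_m| \leq \|E\|_F < \varepsilon$ to lower-bound the gap $\sigma_m - \lambda_{m+1} > \lambda_m - \lambda_{m+1} - \varepsilon$, and then the Davis--Kahan $\sin\theta$ theorem to conclude. If anything, you are slightly more careful than the paper in explicitly verifying the hypothesis $\sigma_m > \lambda_{m+1}$ of \thref{daviskahan} and the strict positivity of the denominator.
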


\begin{proof}
First, note that $B_\varepsilon(A)$ is uniquely projected by $\pi$ due to \thref{grprojballuniq} since $\lambda_m-\lambda_{m+1} \geq\sqrt{2}\varepsilon.$
Let $E$ be an arbitrary Hermitian matrix with $\|E\|_F<\varepsilon,$ i.e. $A+E\in B_\varepsilon(A),$ and denote the eigenvalues of $A+E$ by $\sigma_1\geq\ldots\geq\sigma_{k+1}.$  By construction and the Wielandt-Hoffman Theorem, \begin{equation*}\sigma_m-\lambda_j=\underbrace{\sigma_m-\lambda_m}_{>-\varepsilon}+\lambda_m-\lambda_j>\lambda_m-\lambda_{m+1}-\varepsilon\end{equation*} for all $j>m.$
Using \thref{daviskahan},
\begin{equation*}
d \left( \pi(A),\pi(A+E) \right)  \leq \frac{\sqrt{2} \|E\|_F}{\sigma_m - \lambda_{m+1}} < \frac{\sqrt{2}\varepsilon}{\lambda_m-\lambda_{m+1}-\varepsilon},
\end{equation*}
which is what had to be shown.
\end{proof}

Using these results and \thref{proconf}, we obtain confidence sets for the extrinsic population mean on $\gr(m,\C^{d+1}).$

\begin{prop}\label{grconfreg}
Let $\alpha\in(0,1),$ and suppose $\bar{Z}_n$ to have eigenvalues $\hat{\lambda}_1\geq\hat{\lambda}_2\geq\ldots\geq\hat{\lambda}_{k+1}.$
\begin{enumerate}[(i)]

  \item If \begin{equation*}\hat{\lambda}_m-\hat{\lambda}_{m+1} \geq\sqrt{\frac{2\left(1-\|\bar{Z}_n\|_F^2+\tfrac{1}{\alpha n}\right)}{\alpha n}},\end{equation*} then $\pi(C(\bar{Z}_n,\alpha n))$ is given by $D = B_{\delta_n}(\hat{\mu}_n)\subset\gr(m,\C^{k+1})$ where \begin{equation*}\delta_n=\frac{\sqrt{2}\sqrt{1-\|\bar{Z}_n\|_F^2+\tfrac{1}{\alpha n}}}{\sqrt{\alpha n}(\hat{\lambda}_m-\hat{\lambda}_{m+1})-\sqrt{1-\|\bar{Z}_n\|_F^2+\tfrac{1}{\alpha n}}}.\end{equation*} Choosing $D = M,$ the closed ball around $\hat{\mu}_n$ with radius $\sqrt{2},$ otherwise ensures that the extrinsic mean $\mu$ is covered by $D$ with probability at least $(1-\alpha).$
	
	\item If $\E Z$ is not uniquely projected, then \begin{equation*}\hat{\lambda}_m-\hat{\lambda}_{m+1} < \sqrt{ \frac{2 \left( 1-\|\bar{Z}_n\|_F^2+\tfrac{1}{\alpha n} \right) }{\alpha n}}\end{equation*} with probability at least $(1-\alpha).$ 
	
	\item If  there is a unique extrinsic population mean, then \begin{equation*} \P \left( \hat{\lambda}_m-\hat{\lambda}_{m+1} < \sqrt{ \frac{2 \left( 1-\|\bar{Z}_n\|_F^2+\tfrac{1}{\alpha n} \right) }{\alpha n}} \right) \leq e^{-\tfrac{1}{2} \left( \sqrt{n}\tfrac{\sigma_m-\sigma_{m+1}}{\sqrt{2}} - \sqrt{\tfrac{1+\tfrac{1}{\alpha n}}{\alpha}} - \sqrt{1-\|\E Z\|} \right)^2 } \overset{n\rightarrow\infty}{\longrightarrow} 0 \end{equation*}
where $\sigma_m,\,\sigma_{m+1}$ are the $m$-th and $(m+1)$-th eigenvalues of $\E Z.$	In particular, the probability of obtaining the trivial confidence set goes to zero at an exponential rate when $n$ tends to infinity. Furthermore, the radius of the confidence ball in (i) is in $O_\P \bigl(\tfrac{1}{\sqrt{n}} \bigr).$
\end{enumerate}
\end{prop}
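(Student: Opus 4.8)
The plan is to treat \emph{(i)} as a direct assembly of the tools already developed, and to reduce \emph{(ii)} and \emph{(iii)} to concentration statements about the deviation $\|\bar{Z}_n - \E Z\|_F$ via the Wielandt--Hoffman theorem.

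For \emph{(i)} I would start from \thref{proconf}: the set $\pi(C(\bar{Z}_n,\alpha n)) = \pi(B_\varepsilon(\bar{Z}_n))$ covers the extrinsic mean with probability at least $1-\alpha$, where by \thref{proball} the radius is $\varepsilon = \sqrt{\tfrac{1}{\alpha n}\left(1 - \|\bar{Z}_n\|_F^2 + \tfrac{1}{\alpha n}\right)}$. The stated eigengap hypothesis is exactly $\hat{\lambda}_m - \hat{\lambda}_{m+1} \geq \sqrt{2}\,\varepsilon$, so \thref{grprojball} applies and yields $\pi(B_\varepsilon(\bar{Z}_n)) \subseteq B_\delta(\hat{\mu}_n)$ with $\delta = \tfrac{\sqrt{2}\varepsilon}{\hat{\lambda}_m - \hat{\lambda}_{m+1} - \varepsilon}$; substituting $\varepsilon$ and multiplying numerator and denominator by $\sqrt{\alpha n}$ shows $\delta = \delta_n$. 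Since enlarging a confidence set only improves coverage, taking $D = B_{\delta_n}(\hat{\mu}_n) \supseteq \pi(C(\bar{Z}_n,\alpha n))$ retains coverage probability at least $1-\alpha$; when the hypothesis fails I would set $D = M$, which trivially contains $\mu$, so the claimed coverage holds in either case.

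The common engine for \emph{(ii)} and \emph{(iii)} is a two-term Wielandt--Hoffman estimate. Writing $E = \bar{Z}_n - \E Z$ and letting $\sigma_1 \geq \cdots \geq \sigma_{k+1}$ be the eigenvalues of $\E Z$, \thref{wielandt} gives $(\hat{\lambda}_m - \sigma_m)^2 + (\hat{\lambda}_{m+1} - \sigma_{m+1})^2 \leq \|E\|_F^2$, and $(a-b)^2 \leq 2(a^2+b^2)$ then yields $\bigl|(\hat{\lambda}_m - \hat{\lambda}_{m+1}) - (\sigma_m - \sigma_{m+1})\bigr| \leq \sqrt{2}\,\|E\|_F$. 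In the unique case $\sigma_m > \sigma_{m+1}$ this gives the lower bound $\hat{\lambda}_m - \hat{\lambda}_{m+1} \geq (\sigma_m - \sigma_{m+1}) - \sqrt{2}\,\|E\|_F$, which is essentially the whole proof of \emph{(iii)}: bounding $1 - \|\bar{Z}_n\|_F^2 \leq 1$ makes the threshold at most $\tfrac{1}{\sqrt{n}}\sqrt{2(1+\tfrac{1}{\alpha n})/\alpha}$, so obtaining the trivial set forces $\|E\|_F > \tfrac{\sigma_m - \sigma_{m+1}}{\sqrt{2}} - \tfrac{1}{\sqrt{n}}\sqrt{(1+\tfrac{1}{\alpha n})/\alpha}$. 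I would bound this by a bounded-differences inequality applied to $f(Z_1,\dots,Z_n) = \|\bar{Z}_n - \E Z\|_F$ — replacing one $Z_i$ changes $f$ by at most $2/n$ since $\|Z_i\|_F = 1$ — together with $\E f \leq \sqrt{\E f^2} = \sqrt{(1-\|\E Z\|_F^2)/n}$ from the computation in \thref{thmmarkov}, producing the stated Gaussian tail (up to the precise form of the mean term). It tends to $0$ because the $\sqrt{n}(\sigma_m-\sigma_{m+1})$ term dominates, and $\delta_n \in O_\P(n^{-1/2})$ follows since its numerator is $O_\P(n^{-1/2})$ while $\hat{\lambda}_m - \hat{\lambda}_{m+1} \to \sigma_m - \sigma_{m+1} > 0$ in probability by the same estimate.

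Part \emph{(ii)} is where I expect the real difficulty. Here $\sigma_m = \sigma_{m+1}$, so the estimate reads $\hat{\lambda}_m - \hat{\lambda}_{m+1} \leq \sqrt{2}\,\|E\|_F$, and the eigengap hypothesis $G$ therefore forces $\|\bar{Z}_n - \E Z\|_F \geq \varepsilon$, i.e.\ $\E Z \notin B_\varepsilon(\bar{Z}_n)$; it would then suffice to show $\P(\E Z \notin B_\varepsilon(\bar{Z}_n)) \leq \alpha$. The natural route is to identify this with the Chebyshev event of \thref{thmmarkov} using $\pi(C) = \pi(B_\varepsilon)$. The obstacle is that $C(\bar{Z}_n,\alpha n)$ and $B_\varepsilon(\bar{Z}_n)$ are genuinely distinct balls — $C$ is centred at the shrunken point $\tfrac{\alpha n}{\alpha n+1}\bar{Z}_n$ — and the Chebyshev threshold $\tfrac{1-\|\E Z\|^2}{\alpha n}$ is phrased via $\|\E Z\|$ whereas $\varepsilon^2$ is phrased via the random $\|\bar{Z}_n\|_F^2$; the clean implication $\E Z \in C \Rightarrow \E Z \in B_\varepsilon$ holds only when $\|\bar{Z}_n\|_F^2 \leq \|\E Z\|^2 + \tfrac{1}{\alpha n}$. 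Closing this gap is the crux, and unlike the sphere case (where non-uniqueness means $\E Z = 0$ and the thresholds coincide) it cannot disappear, since $\operatorname{tr}(\E Z) = \sqrt{m} \neq 0$. I would attempt to absorb it either by working directly with the acceptance region $C$ so that \thref{proconf} can be invoked verbatim, or else by a direct first-moment bound on the quadratic $(\alpha n+1)\|E\|_F^2 + 2\langle \E Z, E\rangle$ that defines the event $\E Z \notin B_\varepsilon(\bar{Z}_n)$.
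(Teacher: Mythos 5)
Your parts \emph{(i)} and \emph{(iii)} are correct and are essentially the paper's own proofs. For \emph{(i)} the paper says only that it ``follows directly from \thref{proconf} and \thref{grprojball}''; your reading --- that \thref{grprojball} yields the inclusion $\pi(B_\varepsilon(\bar Z_n))\subseteq B_{\delta_n}(\hat\mu_n)$, which suffices for coverage, with the algebra $\delta=\delta_n$ after multiplying through by $\sqrt{\alpha n}$ --- is exactly what is left implicit there. Your \emph{(iii)} matches the paper almost line for line: the paper likewise lower-bounds the sample gap by $\sigma_m-\sigma_{m+1}-\sqrt{2}\,\|\bar Z_n-\E Z\|_F$ via Wielandt--Hoffman, replaces $1-\|\bar Z_n\|_F^2$ by $1$, and bounds the resulting tail of $\|\bar Z_n-\E Z\|_F$ ``using results of [BLM]'', i.e.\ precisely your bounded-differences argument with increments $2/n$ and $\E f\le\sqrt{(1-\|\E Z\|_F^2)/n}$. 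Reassuringly, the exponent in the paper's \emph{proof} carries $\sqrt{1-\|\E Z\|^2}$, so the $\sqrt{1-\|\E Z\|}$ in the statement is a typo and the form you derive is the intended one.

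For \emph{(ii)}, your proposal is genuinely incomplete --- you reduce the claim to the self-normalized bound $\P\bigl(\|\bar Z_n-\E Z\|_F\ge\varepsilon\bigr)\le\alpha$ with the random radius $\varepsilon=\sqrt{\tfrac{1}{\alpha n}\bigl(1-\|\bar Z_n\|_F^2+\tfrac{1}{\alpha n}\bigr)}$ and then only list possible attacks --- but you should know that the crux you isolated is exactly where the paper itself is thin: its entire proof of \emph{(ii)} is your Wielandt--Hoffman line followed by the bare assertion ``By \thref{proconf}, $\|\E Z -\bar{Z}_n \|_F < \varepsilon$ with probability at least $(1-\alpha)$.'' As you observe, \thref{thmmarkov} and \thref{proconf} only guarantee $\E Z\in C(\bar Z_n,\alpha n)$ with probability at least $1-\alpha$, and this does not imply $\E Z\in B_\varepsilon(\bar Z_n)$: writing $C(p,a)$ as the open ball of radius $\sqrt{a+1-a\|p\|^2}/(a+1)$ about your shrunken centre $\tfrac{a}{a+1}p$, the farthest point of $C(p,a)$ from $p$ lies at distance $\bigl(\|p\|+\sqrt{a+1-a\|p\|^2}\bigr)/(a+1)$, and a short computation shows this is at most $\varepsilon$ if and only if $\|p\|^2\le\tfrac1a$ --- so the inclusion $C(\bar Z_n,\alpha n)\subseteq B_\varepsilon(\bar Z_n)$ holds only in the degenerate regime $\|\bar Z_n\|_F^2\le\tfrac{1}{\alpha n}$ and fails on the side of $C$ toward the origin otherwise (consistent with \thref{proball}, which asserts only containment in the \emph{cone} over $B_\varepsilon$). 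Your hesitation is therefore warranted: the paper does not close the gap you found, and the inequality needs its own argument --- a crude Markov bound on the defining quadratic $(\alpha n+1)\|E\|_F^2+2\langle\E Z,E\rangle\ge 1-\|\E Z\|_F^2+\tfrac{1}{\alpha n}$ comes out to roughly $\alpha\cdot\tfrac{v}{v+1/(\alpha n)}$ plus lower-order terms ($v=1-\|\E Z\|_F^2$), which does not settle it. If you cannot prove the bound, two honest repairs are: state \emph{(ii)} on the intersection with your event $\{\|\bar Z_n\|_F^2\le\|\E Z\|_F^2+\tfrac{1}{\alpha n}\}$, where the implication is immediate; or argue via coverage, noting that under non-uniqueness the extrinsic mean set has diameter at least $\sqrt{2/m}$, so on the coverage event of \thref{proconf} the eigengap condition forces $2\delta_n\ge\sqrt{2/m}$, giving a slightly weaker but correct version of \emph{(ii)}.
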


\begin{proof} \emph{(i)} follows directly from \thref{proconf} and \thref{grprojball}.

For \emph{(ii)}, recall the \hyperref[wielandt]{Wielandt-Hoffman Theorem} and note that \begin{align*}
\quad & \lambda_m-\lambda_{m+1} - (\sigma_m-\sigma_{m+1}) \leq \sqrt{2} \| \E Z -\bar{Z}_n \|_F \\
\Leftrightarrow \quad& \lambda_m-\lambda_{m+1} \leq \sigma_m-\sigma_{m+1} + \sqrt{2} \| \E Z -\bar{Z}_n \|_F
\end{align*}
where $\sigma_m$ and $\sigma_{m+1}$ denote the $m$-th and ${m+1}$-th eigenvalues of $\E Z.$ If $\pi(\E Z)$ is not unique, then $\sigma_m=\sigma_{m+1}.$ By \thref{proconf}, 
\begin{equation*}
\| \E Z -\bar{Z}_n \|_F < \sqrt{\tfrac{1}{\alpha n} \bigl(1-\|\bar{Z}_n\|_F^2+\tfrac{1}{\alpha n} \bigr)}
\end{equation*}
with probability at least $(1-\alpha).$

For \emph{(iii)},
\begin{align*}
\P \left( \lambda_m-\lambda_{m+1} \geq \sqrt{\tfrac{2 \left( 1-\|\bar{Z}_n\|_F^2+\tfrac{1}{\alpha n} \right) }{\alpha n}} \right) & \geq \P \left( \sigma_m-\sigma_{m+1} - \sqrt{2} \| \E Z -\bar{Z}_n \|_F \geq \sqrt{\tfrac{2 \left(1+\tfrac{1}{\alpha n} \right)}{\alpha n}} \right) \\
& = \P \left( \|\E Z -\bar{Z}_n \|_F \leq \tfrac{\sigma_m-\sigma_{m+1}}{\sqrt{2}} - \sqrt{\tfrac{1+\tfrac{1}{\alpha n} }{\alpha n}} \right) \\
& \geq 1 - e^{-\tfrac{1}{2} \left( \sqrt{n}\tfrac{\sigma_m-\sigma_{m+1}}{\sqrt{2}} - \sqrt{\tfrac{1+\tfrac{1}{\alpha n}}{\alpha}} - \sqrt{1-\|\E Z\|^2} \right)^2 } \\
& \longrightarrow 1,
\end{align*} using results of \citep{BLM}.
\end{proof}

\section{Application to shape spaces}\label{Applications}

Consider the projective shape space $P\Sigma_m^k,$ $k>m+2$ consisting of all shapes with a projective frame in the first $m+2$ points. A result of \citet{MP} is a diffeomorphism $P\Sigma_m^k \cong \left(\rp^m\right)^q$ with $q=k-m-2,$ hence non-asymptotic confidence sets can be computed using \thref{grconfreg}. For the sake of readability, let $q=1$ throughout this example. Recall that the Euclidean space $\R^m$ can be embedded in $\rp^m$ preserving collinearity, e.g.\ by using homogeneous coordinates 
$$x=(x_1,\ldots,x_m)^t\mapsto[x:1]:=[x_1:\ldots:x_m:1]$$ 
missing only the hyperplane at infinity. Using this embedding, a point $[X_1:\ldots:X_m:X_{m+1}]\in\rp^m$ has a representative in $\mathbb{R}^m$ if and only if $X_{m+1}\neq0.$ Given data in $P\Sigma_m^k,$ we want to illustrate a confidence set for the extrinsic mean in $\mathbb{R}^m$ if the extrinsic sample mean $\mu$ is not in the infinity hyperplane, i.e.\ has a representative in $\mathbb{R}^m.$

Define $\tilde{x}:=(x^t,1)^t\in\mathbb{R}^{m+1}$ for $x\in\mathbb{R}^m$ and let $[z:1]=\mu.$ Then for all $x\in\mathbb{R}^m$ with 
\begin{equation*}
d^2([x:1],[z:1])=2-\frac{2}{\left(\|x\|^2_2+1\right)\left(\|z\|^2_2+1\right)}\left(x^tz+1\right)^2<\delta_n^2
\end{equation*} the following holds: 
\begin{equation*}\left(2-\delta_n^2\right)\left(\|z\|^2_2+1\right)<2\left(x^tz+1\right)^2-\left(2-\delta_n^2\right)\|x\|^2_2\left(\|z\|^2_2+1\right).
\end{equation*} Hence, visualizing confidence sets for extrinsic means in $\Sigma_m^k$ is understanding quadrics in $\mathbb{R}^m.$ Particularly in the case $m=2,$ these quadrics represent cone sections such that the confidence sets are the ``interior'' of these.

\begin{figure}[t]
\centering
\includegraphics[scale=0.6]{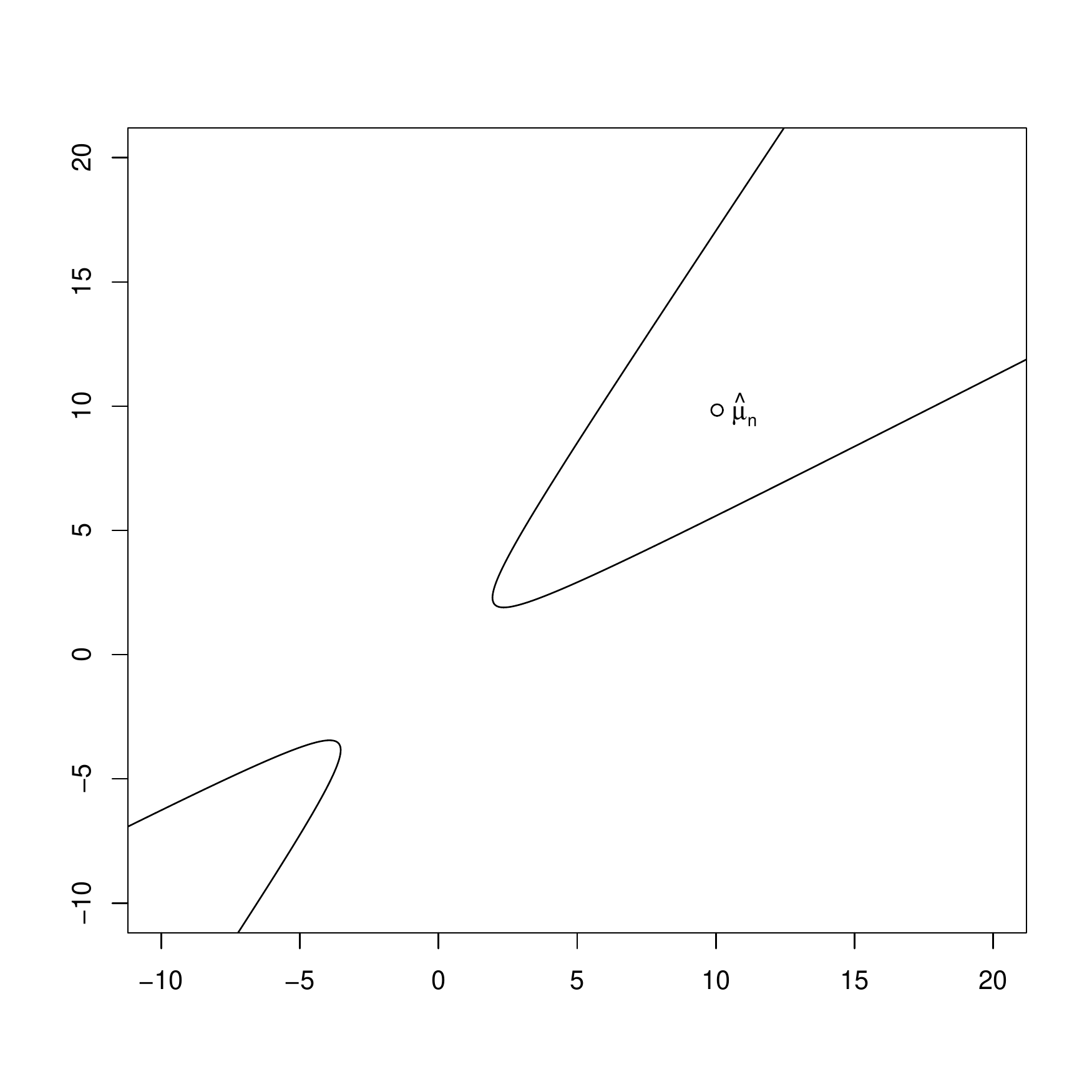} 

\caption{Here, the confidence set to the synthesized data from \hyperref[Applications]{Section 5} is plotted. This confidence set is a subset of $\R^2$ when inhomogeneous coordinates are used. The confidence set contains the extrinsic sample mean $\hat{\mu}_n,$ and the boundary of the confidence region is a hyperbola.}
\label{RP2}

\end{figure}

A synthesized data set will be used to show the value of this discussion. Consider $n=100$ points $(10+x_1, 10+x_2)\in\R^2$ with $x_1,x_2$ being random real numbers generated by \texttt{R} with univariate distribution (mean 0, standard derivation 1). Embedding of $\mathbb{R}^2$ in $\rp^2$ produces data in $\rp^2$ resp.\ in $\herm(3).$ The resulting Euclidean sample mean is 
\begin{equation*}
\begin{pmatrix} 0.5075 & 0.4917 & 0.0503 \\ 0.4917 & 0.4875 & 0.0492 \\ 0.0503 & 0.0492 & 0.005 \end{pmatrix},
\end{equation*}
 hence the extrinsic sample mean is 
\begin{equation*}
 \begin{bmatrix} 10.0447 \\ 9.8422 \\ 1 \end{bmatrix}
\end{equation*}
 and $\delta_{100}=0.3713.$ See \hyperref[RP2]{Figure\thref{RP2}} for the visualization.


\section{Discussion and outlook}

We showed how to construct non-asymptotic and rate-optimal confidence sets for the extrinsic population mean for i.i.d.\ data on spheres resp.\ Grassmann manifold. Unfortunately, these are too big in comparison to asymptotic confidence sets to be of practical use due to the fact that the Chebychev inequality has rather loose bounds and is not sharp for bounded random variables. Therefore, sharper inequalities for multivariate, bounded random variables would result in smaller confidence regions. Additionally, one would like to take the sample covariance into account as in \citep{HKW}. Unfortunately, the construction of the confidence sets gets tougher for the known multivariate mass concentration inequalities whence the construction of these is one of the aims of future research.

\bibliographystyle{dcu}
\bibliography{confreg}

\end{document}